\documentclass[a4paper, 12pt, onecolumn]{article}
\usepackage{geometry}
\geometry{left=3cm,right=3cm,top=2cm,bottom=2.5cm}
\usepackage{amsmath}
\usepackage{amsthm}
\usepackage{amsfonts}
\usepackage{bbm}
\usepackage{CJK}
\usepackage{fancyhdr}
\usepackage{graphicx}
\usepackage{psfrag}
\usepackage{amsfonts,amsmath,amsthm, amssymb}
\usepackage{latexsym, euscript, epic, eepic}
\usepackage{time}
\usepackage{txfonts}
\usepackage{colortbl}
\usepackage{stmaryrd}
\usepackage{mathrsfs}
\usepackage{txfonts}
\usepackage{amsfonts}
\usepackage{color}
\usepackage{lineno}
\usepackage[square, comma, sort&compress, numbers]{natbib}

\usepackage{indentfirst,latexsym,bm}

\setlength{\parindent}{2em} \numberwithin{equation}{section}

\begin{document}
\newtheorem{theorem}{Theorem}[section]
\newtheorem{proposition}[theorem]{Proposition}
\newtheorem{remark}[theorem]{Remark}
\newtheorem{corollary}[theorem]{Corollary}
\newtheorem{definition}{Definition}[section]
\newtheorem{lemma}[theorem]{Lemma}
\newcommand{\wuhao}{\fontsize{10pt}{10pt}\selectfont}

\title{Sharp Estimates of the Generalized Euler-Mascheroni Constant
 \footnotetext{E-mail address:
htiren@zstu.edu.cn, }}
\author{\normalsize  Ti-Ren Huang$^1$\footnote{Corresponding author}, Bo-Wen Han$^1$, You-Ling Liu$^2$, Xiao-Yan Ma$^1$ \\
\scriptsize 1 Department of Mathematics, Zhejiang Sci-Tech
University, Hangzhou 310018, China\\
\scriptsize 2 Tourism College of Zhejiang, Hangzhou  311231, China }
\date{}
\maketitle
 \fontsize{12}{22}\selectfont\small
 \paragraph{Abstract:} Let $a\in (0, \infty)$, $\gamma(a)$ be the Generalized Euler-Mascheroni Constant, and  let
 \begin{align*}
 &x_n=\frac1a+\frac{1}{a+1}+\cdots+\frac{1}{a+n-1}-\ln\frac{a+n}{a},\\
 &y_n=\frac1a+\frac{1}{a+1}+\cdots+\frac{1}{a+n-1}-\ln\frac{a+n-1}{a}.
 \end{align*}
 In this paper, we determine the best possible constants $\alpha_i, \beta_i (i=1,2,3,4)$ such that the following inequalities
 \begin{align*}
 \frac{1}{2(n+a)-\alpha_1}\leq &\gamma(a)-x_n< \frac{1}{2(n+a)-\beta_1},\\
  \frac{1}{2(n+a)-\alpha_2}\leq &y_n-\gamma(a)< \frac{1}{2(n+a)-\beta_2},\\
  \frac{1}{2(n+a)}+\frac{\alpha_3}{(n+a)^2}\leq &\gamma(a)-x_n<\frac{1}{2(n+a)}+\frac{\beta_3}{(n+a)^2},\\
  \frac{1}{2(n+a-1)}+\frac{\alpha_4}{(n+a-1)^2}< &y_n-\gamma(a)\leq\frac{1}{2(n+a-1)}+\frac{\beta_4}{(n+a-1)^2}.
 \end{align*}
 are valid for all integers $n\geq 1$.
\\[10pt]
\emph{Key Words}: Generalized Euler-Mascheroni Constant, Inequalities, Psi function.
\\[10pt]
\emph{Mathematics Subject Classification}(2010): 11Y60, 40A05, 33B15.
\section{\normalsize Introduction}\label{sec:bd}

One of the most important sequences in analysis and number theory of the form
\begin{align}\label{Eular r_n}
\gamma_n= 1+\frac12+\frac13+\cdots+\frac1n-\ln n,
\end{align}
considered by Leonhard Euler in 1735, is known to converge towards the limit
\begin{align}\label{Eular r_n}
\gamma=0.57721566490115328\cdots,
\end{align}
which is now called the Euler-Mascheroni Constant. For $\gamma_n-\gamma$,  many lower and upper estimates have
been given in the literature\cite{Alzer1,Chen,Mortici9,Qiu S L,Toth}. We remind some of them:

In \cite{Alzer1}, Alzer proved that for $n\geq 1$,
\begin{align*}
\frac{1}{2n+1}\leq \gamma_n-\gamma \leq \frac{1}{2n}.
\end{align*}

T$\acute{o}$th \cite{Toth} proved that for $n\geq 1$,
\begin{align}
\frac{1}{2n+\frac25}< \gamma_n-\gamma \leq \frac{1}{2n+\frac13}.
\end{align}

In \cite{Chen}, Chen proved that for $n\geq 1$,
\begin{align}\label{chen}
%\frac{1}{2n+\frac{2\gamma-1}{1-\gamma}}\leq \gamma_n-\gamma \leq \frac{1}{2n+\frac13},
\frac{1}{2n+\alpha}\leq \gamma_n-\gamma < \frac{1}{2n+\beta},
\end{align}
where the constants $\alpha=(2\gamma-1)/(1-\gamma)$ and $\beta=1/3$  are the best possible.

Qiu and Vuorinen \cite{Qiu S L} showed the double inequality
\begin{align}\label{Qiu}
%\frac{1}{2n}-\frac{1}{2n^2}\leq \gamma_n-\gamma \leq \frac{1}{2n}-\frac{\gamma-1/2}{n^2}, \quad n\geq 1.
\frac{1}{2n}-\frac{\alpha}{n^2}< \gamma_n-\gamma \leq \frac{1}{2n}-\frac{\beta}{n^2}, \quad n\geq 1,
\end{align}
where the constants $\alpha=1/12$ and $\beta=\gamma-1/2$  are the best possible.

For every $a > 0$; the numbers of the form
\begin{align}\label{gamma(a)}
\gamma(a)=\lim_{n\rightarrow \infty}\left(\frac1a+\frac{1}{a+1}+\cdots+\frac{1}{a+n-1}-\ln\frac{a+n-1}{a}\right)
\end{align}
were introduced in the monograph by Knopp \cite{Knopp}. There are known now as the
generalized Euler-Mascheroni constant, since $\gamma(1)=\gamma$. Recently,  the
generalized Euler-Mascheroni constant $\gamma(a)$ has been the
subject of intensive research \cite{S,Mortici9,Berinde}, similar to $\gamma$.

In \cite{S}, S$\hat{\i}$nt$\check{a}$m$\check{a}$rian consider the following sequences

\begin{align}
 &x_n=\frac1a+\frac{1}{a+1}+\cdots+\frac{1}{a+n-1}-\ln\frac{a+n}{a},\label{x_n}\\
 &y_n=\frac1a+\frac{1}{a+1}+\cdots+\frac{1}{a+n-1}-\ln\frac{a+n-1}{a}.\label{y_n}
 \end{align}
 and proved that for $n\geq 1$, the following inequalities hold,
 \begin{align}
 \frac{1}{2(n+a)}\leq \gamma(a)-x_n\leq \frac{1}{2(n+a-1)}, \quad \frac{1}{2(n+a)}\leq y_n-\gamma(a)\leq \frac{1}{2(n+a-1)}.
 \end{align}
 Hence, we can easily know that $x_n,y_n$ converge to $\gamma(a)$ like $n^{-1}$.

In \cite{Berinde}, Berinde and Mortici  gave better bounds for $\gamma(a)-x_n, y_n-\gamma(a)$, showing the following
\begin{theorem}\label{theorem1}
For each $a>0,n\geq 2$, then
 \begin{align}
 \frac{1}{2(n+a)-\frac14}&< \gamma(a)-x_n< \frac{1}{2(n+a)-\frac13},\quad
 \frac{1}{2(n+a)-\frac43}&< y_n-\gamma(a)<\frac{1}{2(n+a)-\frac53}.
 \end{align}
 \end{theorem}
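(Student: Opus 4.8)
The plan is to pass from the sequences to the digamma function $\psi=\Gamma'/\Gamma$ and reduce the four sequence inequalities to four inequalities in a single continuous variable. Writing the partial sum as $\frac1a+\cdots+\frac{1}{a+n-1}=\psi(a+n)-\psi(a)$ and using $\psi(a+n)-\ln(a+n-1)\to 0$ (which follows from $\psi(x)=\ln x-\frac{1}{2x}+O(x^{-2})$), the defining limit for $\gamma(a)$ collapses to the closed form $\gamma(a)=\ln a-\psi(a)$, consistent with $\gamma(1)=-\psi(1)=\gamma$. Inserting this into \eqref{x_n} and \eqref{y_n}, the terms $\psi(a)$ and $\ln a$ cancel, and the quantities to be estimated become
\begin{align*}
\gamma(a)-x_n=\ln(a+n)-\psi(a+n),\qquad y_n-\gamma(a)=\psi(a+n)-\ln(a+n-1).
\end{align*}
Putting $m=a+n$, which runs through $(2,\infty)$ as $a>0$ and $n\ge 2$ vary, the theorem becomes equivalent to
\begin{align*}
\frac{1}{2m-\frac14}<\ln m-\psi(m)<\frac{1}{2m-\frac13},\qquad \frac{1}{2m-\frac43}<\psi(m)-\ln(m-1)<\frac{1}{2m-\frac53},
\end{align*}
for all $m>2$.

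For each of the four bounds I would introduce the difference $\phi(x)=(\text{bound})-(\text{quantity})$ — for instance $\phi(x)=\ln x-\psi(x)-\frac{1}{2x-1/4}$ for the first lower estimate — and prove $\phi>0$ on $(2,\infty)$ by a recurrence-and-telescoping device. Since $\psi(x+1)-\psi(x)=1/x$, the one-step difference $D(x):=\phi(x)-\phi(x+1)$ is purely elementary: the digamma contribution collapses to $\ln\frac{x}{x+1}+\frac1x$ for the $x_n$ pair and to $\ln\frac{x}{x-1}-\frac1x$ for the $y_n$ pair, leaving only rational terms. Because $\phi(x)\to 0$ as $x\to\infty$, telescoping yields $\phi(x)=\sum_{k=0}^{\infty}D(x+k)$, so that $\phi>0$ on $(2,\infty)$ follows at once from $D>0$ there. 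The restriction $m>2$ (that is, $n\ge 2$) enters precisely because positivity of $D$ is needed only for arguments exceeding $2$.

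The crux is therefore to show that each elementary difference $D(x)$ is positive on the whole half-line $(2,\infty)$, not merely for large $m$. The constants $\frac14,\frac13,\frac43,\frac53$ are tuned to the asymptotic expansion of $\psi$, so in the $1/x$-expansion of $D$ the leading coefficients cancel to high order — through $x^{-2}$ in general, and through $x^{-3}$ for the two bounds whose constant ($\frac13$ or $\frac53$) coincides with the true asymptotic value — leaving only a faint positive tail such as $\frac{1}{24}x^{-3}$. To certify positivity uniformly down to the endpoint $m=2$, I would either replace each logarithm by sharp two-sided rational (Pad\'e-type) bounds for $\ln(1+1/x)$ and $\ln(1+1/(x-1))$, turning $D>0$ into a polynomial inequality over a manifestly positive denominator whose numerator is then verified to be positive on $(2,\infty)$; or differentiate $D$, note that $D'$ is rational, track the sign of its numerator polynomial, and combine this with $D(\infty)=0$ to deduce monotonicity and hence positivity. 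The delicate polynomial sign-check near $m=2$, where the positive tail is thinnest, is the step that demands the most care.
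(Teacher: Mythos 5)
You should first note that this paper does not actually prove Theorem \ref{theorem1}: it is quoted verbatim from Berinde--Mortici \cite{Berinde}, and the only in-paper machinery bearing on it is Lemma \ref{lemma1} together with Theorem \ref{main theorem1}. Measured against that machinery, your route is genuinely different. The paper's natural argument would be global monotonicity of $f_1(x)=\frac{1}{\ln x-\psi(x)}-2x$ and $f_2(x)=\frac{1}{\psi(x+1)-\ln x}-2x$ (proved by bounding $\psi'$ and $\psi$ by rational functions and checking the sign of one polynomial per function), after which the four constants $\tfrac14,\tfrac13,\tfrac43,\tfrac53$ drop out from the endpoint/limit values; e.g.\ the first chain reduces to $-\tfrac13<f_1(m)<-\tfrac14$ for $m>2$, which follows from Lemma \ref{lemma1}(1) plus the single evaluation $f_1(2)=\frac{1}{\ln 2-1+\gamma}-4\approx-0.30<-\tfrac14$. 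Your route is instead Mortici's telescoping device: with $\phi=\text{(quantity)}-\text{(bound)}$ vanishing at infinity, $\phi(x)=\sum_{k\ge0}\bigl(\phi(x+k)-\phi(x+k+1)\bigr)$, and $\psi(x+1)-\psi(x)=1/x$ makes each summand elementary. This is in fact how the original source proves the theorem, and it buys you something the paper's lemma does not: Lemma \ref{lemma1}(2) only controls $f_2$ on $[2,\infty)$, which misses the case $n=2$, $0<a<1$ of the second chain, whereas your reduction covers all $m>2$ uniformly. Your preliminary reductions ($\gamma(a)=\ln a-\psi(a)$, hence $\gamma(a)-x_n=\ln(n+a)-\psi(n+a)$ and $y_n-\gamma(a)=\psi(n+a)-\ln(n+a-1)$, with $m=n+a$ ranging over exactly $(2,\infty)$) agree with the paper's equations (\ref{gamma-x_n})--(\ref{y_n-gamma}) and are correct, as are your leading asymptotics ($D(x)\sim\tfrac{1}{24}x^{-3}$ for the $\tfrac14$ bound, $D(x)\sim\tfrac{1}{24}x^{-4}$ for the $\tfrac13$ bound).

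The one caveat is that the decisive step --- positivity of the four elementary differences $D$ on all of $(2,\infty)$ --- is described but not executed, and it is not a formality: each check amounts to verifying that an explicit polynomial has constant sign on a half-line, and the margin is thin near the endpoint. The checks do go through; for instance at $m=2$ one finds
\begin{align*}
\ln\tfrac23+\tfrac12-\tfrac{2}{(15/4)(23/4)}\approx 1.8\times10^{-3},\qquad
\tfrac{2}{(11/3)(17/3)}+\ln\tfrac32-\tfrac12\approx 1.7\times10^{-3},
\end{align*}
for the two $x_n$ bounds, and similarly $\approx 3.2\times10^{-2}$ and $\approx 4.7\times10^{-3}$ for the $y_n$ bounds. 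To close the argument you must still carry out the Pad\'e-type (or derivative-sign) verification for each of the four cases; until then the proposal is a correct and well-chosen plan rather than a complete proof.
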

In the same paper, Berinde and Mortici   obtained the following theorem.
\begin{theorem}
a) For each $a\geq \frac{13}{30}$ and each integer $n\geq 1$, then
 \begin{align}
 \frac{1}{2(n+a)-\frac13+\frac{1}{18n}}&\leq \gamma(a)-x_n.
 \end{align}
b) For each $a\geq \frac{17}{30}$ and each integer$n\geq 1$, then
\begin{align}
 \frac{1}{2(n+a)-\frac53+\frac{1}{18n}}&\leq y_n-\gamma(a).
 \end{align}
 \end{theorem}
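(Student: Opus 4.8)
The plan is to push everything onto the digamma function $\psi=\Gamma'/\Gamma$. First I would record the closed form $\gamma(a)=\ln a-\psi(a)$, which follows by inserting $\sum_{k=0}^{n-1}\frac{1}{a+k}=\psi(a+n)-\psi(a)$ together with $\psi(a+n)-\ln(a+n)\to0$ into \eqref{gamma(a)}. Using $\psi(x+1)-\psi(x)=1/x$ this yields the two exact identities
\begin{align*}
\gamma(a)-x_n=\ln(n+a)-\psi(n+a),\qquad y_n-\gamma(a)=\psi(n+a)-\ln(n+a-1),
\end{align*}
both of which are positive. Writing $x=n+a$ and taking reciprocals, part a) becomes
\begin{align*}
\lambda(x):=\frac{1}{\ln x-\psi(x)}-\Big(2x-\tfrac13\Big)\ \le\ \frac{1}{18n}=\frac{1}{18(x-a)},
\end{align*}
and part b) becomes the companion statement $\mu(x):=\dfrac{1}{\psi(x)-\ln(x-1)}-\big(2x-\tfrac53\big)\le\dfrac{1}{18(x-a)}$.

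The decisive step is to split off an $a$-free estimate carrying all of the special-function content. From $\ln x-\psi(x)=\frac{1}{2x}+\frac{1}{12x^2}-\frac{1}{120x^4}+\cdots$ a direct reciprocation gives $\lambda(x)=\frac{1}{18x}+\frac{13}{540x^2}-\frac{31}{3240x^3}+\cdots$, and the analogous computation from $\psi(x)-\ln(x-1)=\frac{1}{2x}+\frac{5}{12x^2}+\frac{1}{3x^3}+\cdots$ gives $\mu(x)=\frac{1}{18x}+\frac{17}{540x^2}-\frac{7}{3240x^3}+\cdots$. The negativity of the third-order coefficients makes the following one-sided bounds the natural targets:
\begin{align*}
\lambda(x)\le\frac{1}{18x}+\frac{13}{540x^2},\qquad \mu(x)\le\frac{1}{18x}+\frac{17}{540x^2}.
\end{align*}
I expect proving these two digamma inequalities to be the main obstacle. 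The route I would take is the telescoping/monotonicity method standard in this literature (cf. \cite{Chen,Mortici9}): setting $\Theta(x)=\ln x-\psi(x)-\rho(x)^{-1}$ with $\rho(x)=2x-\tfrac13+\tfrac1{18x}+\tfrac{13}{540x^2}$, one has $\Theta(x)\to0$, while $\psi(x+1)-\psi(x)=1/x$ collapses the telescoped difference to the elementary quantity
\begin{align*}
\Theta(x)-\Theta(x+1)=\frac1x-\ln\!\Big(1+\frac1x\Big)-\Big[\rho(x)^{-1}-\rho(x+1)^{-1}\Big].
\end{align*}
Bounding $\frac1x-\ln(1+\frac1x)$ below by the truncated alternating series $\frac1{2x^2}-\frac1{3x^3}$ and clearing denominators reduces the sign of this difference to the positivity of an explicit polynomial; once that is secured $\Theta$ is nonincreasing, hence $\Theta(x)\ge\lim_{y\to\infty}\Theta(y)=0$. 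The same scheme, with $2x-\tfrac13$ replaced by $2x-\tfrac53$ and $\ln x$ by $\ln(x-1)$, handles $\mu$.

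Granting the two $a$-free bounds, the constants $\tfrac{13}{30}$ and $\tfrac{17}{30}$ drop out of a purely elementary comparison. For part a) it remains to verify, for $x=n+a$ with $n\ge1$,
\begin{align*}
\frac{1}{18x}+\frac{13}{540x^2}\le\frac{1}{18(x-a)},
\end{align*}
which after clearing denominators is exactly $39x\le 90ax+39a$; since $a\ge\frac{13}{30}$ forces $90a\ge39$, this holds. Chaining $\lambda(n+a)\le\frac1{18(n+a)}+\frac{13}{540(n+a)^2}\le\frac1{18n}$ proves a). For part b) the identical step is $51x\le90ax+51a$, which holds precisely when $a\ge\frac{17}{30}$, and chaining $\mu(n+a)\le\frac1{18(n+a)}+\frac{17}{540(n+a)^2}\le\frac1{18n}$ proves b).

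The one genuinely delicate point is the left end of the range. Because $a$ varies continuously, $x=n+a$ sweeps out the whole half-line $[\tfrac{43}{30},\infty)$ in part a) (resp. $[\tfrac{47}{30},\infty)$ in part b)), so the $a$-free bounds must be established all the way down to these endpoints, not merely asymptotically; thus the polynomial positivity arising in the telescoping step must be checked on a half-line rather than near infinity. I would do this by isolating the finitely many real roots of that polynomial and confirming it stays positive beyond $\tfrac{43}{30}$ (resp. $\tfrac{47}{30}$), using Theorem \ref{theorem1} as a consistency check since it already guarantees $\lambda(x),\mu(x)>0$ together with the correct leading term $\tfrac1{2x}$.
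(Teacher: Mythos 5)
First, a caveat on the comparison itself: this statement is the second theorem of Section 1, which the authors quote as background from Berinde and Mortici \cite{Berinde}; the paper contains no proof of it, so your attempt can only be judged on its own merits rather than against an in-paper argument. On those merits, your architecture is sound: the identities $\gamma(a)-x_n=\ln(n+a)-\psi(n+a)$ and $y_n-\gamma(a)=\psi(n+a)-\ln(n+a-1)$ are correct, the reciprocal reformulation $\lambda(n+a)\le\tfrac{1}{18n}$, $\mu(n+a)\le\tfrac{1}{18n}$ is correct, your expansions (including the coefficients $\tfrac{13}{540}$, $\tfrac{17}{540}$ and the negative cubic terms) check out, and the elementary endgame --- reducing $\tfrac{1}{18x}+\tfrac{13}{540x^2}\le\tfrac{1}{18(x-a)}$ to $13x\le 30ax+13a$, which $a\ge\tfrac{13}{30}$ guarantees, and likewise $17x\le 30ax+17a$ for $a\ge\tfrac{17}{30}$ --- is exactly the mechanism that produces the two thresholds. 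So the entire weight of the proof rests on the two $a$-free inequalities $\lambda(x)\le\tfrac{1}{18x}+\tfrac{13}{540x^2}$ and $\mu(x)\le\tfrac{1}{18x}+\tfrac{17}{540x^2}$, which you explicitly defer.

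There is the genuine gap, and it is not a matter of missing detail: the telescoping scheme you sketch fails quantitatively. Take part a), with $\rho(x)=2x-\tfrac13+\tfrac{1}{18x}+\tfrac{13}{540x^2}$ and $\Theta(x)=\ln x-\psi(x)-\rho(x)^{-1}$. Your own expansion $\frac{1}{\ln x-\psi(x)}=\rho(x)-\frac{31}{3240x^3}+O(x^{-4})$ gives $\Theta(x)=\frac{31}{12960x^5}+O(x^{-6})$, hence the telescoped difference you must prove nonnegative satisfies $\Theta(x)-\Theta(x+1)=\frac{31}{2592x^6}+O(x^{-7})$: the margin is of order $x^{-6}$ with constant about $0.012$. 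Replacing $\tfrac1x-\ln(1+\tfrac1x)$ by $\tfrac{1}{2x^2}-\tfrac{1}{3x^3}$ discards $\tfrac{1}{4x^4}+O(x^{-5})$, which swamps that margin: after the substitution the quantity to be shown nonnegative equals $-\tfrac{1}{4x^4}+O(x^{-5})$, so the ``explicit polynomial'' is negative for all large $x$, and its positivity can never be secured. Even the next admissible truncation $\tfrac{1}{2x^2}-\tfrac{1}{3x^3}+\tfrac{1}{4x^4}-\tfrac{1}{5x^5}$ fails, since its defect $\sim\tfrac{1}{6x^6}$ still exceeds $\tfrac{31}{2592x^6}$; you would need the alternating series at least through $-\tfrac{1}{7x^7}$, plus a separate verification near the endpoint $x=\tfrac{43}{30}$, and part b) is tighter still (margin $\sim\tfrac{7}{2592x^6}$). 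A repair that avoids all of this for part a): use the inequality $\ln x-\psi(x)>\tfrac{1}{2x}+\tfrac{1}{12x^2}-\tfrac{1}{120x^4}$ from \cite{Chen2}, already quoted in the paper's proof of Lemma \ref{lemma1}; writing $\rho(x)=\frac{1080x^3-180x^2+30x+13}{540x^2}$ and cross-multiplying, the key bound for $\lambda$ reduces to $310x^2-30x-13\ge 0$, which holds for all $x\ge1$. The corresponding repair for part b) needs noticeably more care, precisely because its margin is smaller.
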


It is natural to extend the above inequalities (\ref{chen}) and (\ref{Qiu}) in terms of generalized Euler-Mascheroni constant $\gamma(a)$. In this paper, we will consider  the two sequences $\gamma-x_n, y_n-\gamma$ where $x_n,y_n$ are defined by  (\ref{x_n}) and (\ref{y_n})  respectively.

The main results are stated as follows.
\begin{theorem}\label{main theorem1}
For each $a>0$ and integer $n\geq 1$, let  $\gamma(a)$ be generalized Euler-Mascheroni constant.

(1) Let the sequences $x_n$ be defined by (\ref{x_n}), then
\begin{align}\label{inquealities1}
\frac{1}{2(n+a)-\alpha_1}&\leq \gamma(a)-x_n<\frac{1}{2(n+a)-\beta_1},
\end{align}
with the best possible constants
\begin{align}
\alpha_1=2(1+a)-\frac{1}{\psi(1+a)-\ln(1+a)},\quad \beta_1=\frac13.
\end{align}

(2) Let the sequences $y_n$ be defined by (\ref{y_n}), then

\begin{align}\label{inquealities2}
\frac{1}{2(n+a)-\alpha_2}&\leq y_n-\gamma(a)<\frac{1}{2(n+a)-\beta_2},
\end{align}
with the best possible constants
\begin{align}
\alpha_2=2(1-d),\qquad  \beta_2=\frac53,
\end{align}
where
\begin{align*}
d=\max\{\tilde{f}_2(a),\tilde{f}_2(1+a),\tilde{f}_2(2+a)\}, \quad \tilde{f}_2(x)=\frac{1}{2(\psi(x+1)-\ln(x))}-x.
\end{align*}
\end{theorem}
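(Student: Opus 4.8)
The plan is to route everything through the digamma function $\psi$. The starting point is the identity $\gamma(a)=\ln a-\psi(a)$, which follows from $\sum_{k=0}^{n-1}\frac{1}{a+k}=\psi(a+n)-\psi(a)$ together with $\lim_{n\to\infty}\bigl(\psi(a+n)-\ln(a+n-1)\bigr)=0$. Inserting this into (\ref{x_n}) and (\ref{y_n}) collapses the two target quantities to
\[ \gamma(a)-x_n=\ln(n+a)-\psi(n+a),\qquad y_n-\gamma(a)=\psi(n+a)-\ln(n+a-1). \]
Writing $t=n+a$ and setting $g(t)=\ln t-\psi(t)$ and $h(t)=\psi(t)-\ln(t-1)$, both functions are positive and decay like $1/(2t)$ at infinity, and for fixed $a$ the variable $t$ runs over the arithmetic progression $\{1+a,2+a,\dots\}$.

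Since $g,h>0$, each two-sided bound becomes a statement about a single auxiliary function. Indeed $\frac{1}{2t-\alpha}\le g(t)<\frac{1}{2t-\beta}$ is equivalent to $\alpha\le F(t)<\beta$ with $F(t)=2t-\frac{1}{g(t)}$, and $\frac{1}{2t-\alpha}\le h(t)<\frac{1}{2t-\beta}$ is equivalent to $\alpha\le G(t)<\beta$ with $G(t)=2t-\frac{1}{h(t)}$. A direct computation yields the link $G(x+1)=2-2\tilde{f}_2(x)$, so determining the sharp constants amounts to locating the infimum and supremum of $F$ and of $G$ over the progression; in particular $\alpha_2=\inf_{n\ge1}G(n+a)=2\bigl(1-\sup_{k\ge0}\tilde{f}_2(a+k)\bigr)$.

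From the expansion $\psi(t)=\ln t-\frac{1}{2t}-\frac{1}{12t^2}+\cdots$ I obtain $F(t)\to\frac13$ and $G(t)\to\frac53$ as $t\to\infty$, which pins down $\beta_1=\frac13$ and $\beta_2=\frac53$. For part (1) I would prove that $F$ is strictly increasing; then its infimum over the progression is $F(1+a)$, attained at $n=1$, giving the sharp left inequality with the stated $\alpha_1$, while its supremum is the unattained limit $\frac13$, giving the strict right inequality. For part (2) the function $G$ is not monotone: it first decreases and then increases toward $\frac53$, so its minimum sits at a small index. The key structural fact I would establish is that $\tilde{f}_2$ is strictly decreasing on $(2,\infty)$; since every point $a+k$ with $k\ge2$ satisfies $a+k>2$, this forces $\max_{k\ge2}\tilde{f}_2(a+k)=\tilde{f}_2(a+2)$, hence the maximizer of $\{\tilde{f}_2(a+k)\}_{k\ge0}$ lies in $k\in\{0,1,2\}$, yielding $d=\max\{\tilde{f}_2(a),\tilde{f}_2(1+a),\tilde{f}_2(2+a)\}$ and $\alpha_2=2(1-d)$.

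The main obstacle is precisely these monotonicity and sign lemmas. Using $\psi(t+1)=\psi(t)+\frac1t$, the inequality $F(t+1)>F(t)$ reduces to $2\,g(t)\,g(t+1)>\frac1t-\ln\bigl(1+\frac1t\bigr)$; but both sides are asymptotic to $\frac{1}{2t^2}$, so the leading terms cancel and the sign is decided only at the next order. Establishing it for all admissible $t$, rather than merely asymptotically, requires sharp two-sided bounds for $\ln t-\psi(t)$ and for $\psi'$. The analogous control of $G$ near its minimum, and of the upper estimate $G(t)<\frac53$ at the smallest values of $n$, is the most delicate point, because there the asymptotic series is unavailable and one must argue through explicit functional inequalities for $\psi$; I expect this small-index analysis, together with the verification that the discrete maximum of $\tilde{f}_2$ cannot escape the first three terms, to be where the real work lies.
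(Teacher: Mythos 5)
Your reduction and bookkeeping coincide with the paper's: the identity $\gamma(a)=\ln a-\psi(a)$, the formulas $\gamma(a)-x_n=\ln(n+a)-\psi(n+a)$ and $y_n-\gamma(a)=\psi(n+a)-\ln(n+a-1)$, the reformulation of each double inequality as a two-sided bound on $2t-1/g(t)$ resp.\ $2t-1/h(t)$, the limits $1/3$ and $5/3$ from the asymptotic series, and the observation that the decrease of $\tilde f_2$ on $[2,\infty)$ confines the discrete maximum to $\{a,1+a,2+a\}$. In the paper's notation your $F$ is $-f_1$ and your $G$ is $2-2\tilde f_2(\cdot-1)$, so this is exactly Lemma 2.1 plus the manipulations of Section 3.

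The genuine gap is that the two monotonicity statements --- $F$ strictly increasing on $(1,\infty)$, equivalently $f_1(x)=\frac{1}{\ln(x)-\psi(x)}-2x$ strictly decreasing, and $\tilde f_2$ strictly decreasing on $[2,\infty)$ --- are announced but never proved, and they are the entire technical content of the theorem; everything else is rearrangement. You correctly diagnose why they are nontrivial (the leading asymptotic terms cancel and the sign is decided at the next order), but you stop exactly there. The paper closes this by differentiating, e.g.\ $(\ln(x)-\psi(x))^2f_1'(x)=\psi'(x)-\tfrac1x-2(\ln(x)-\psi(x))^2$, and inserting the Chen--Qi two-sided rational bounds for $\psi'(x)-\tfrac1x$ and $\ln(x)-\psi(x)$ (and their analogues for $f_2$), which reduces the sign of $f_1'$ and $f_2'$ to the negativity of an explicit polynomial in $x-1$, resp.\ $x-2$, with coefficients all of one sign; some argument of this kind must be supplied before the proof is complete. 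Two further points. First, your infimum computation gives $\alpha_1=2(1+a)-\frac{1}{\ln(1+a)-\psi(1+a)}$, which is not literally the displayed $\alpha_1=2(1+a)-\frac{1}{\psi(1+a)-\ln(1+a)}$ (the denominator's sign is reversed, and $\ln(1+a)>\psi(1+a)$), so you cannot simply assert that your value ``is the stated $\alpha_1$'' without reconciling the two. Second, the strict bound $G(t)<5/3$ at the points $a$ and $1+a$ when these are less than $2$ is not covered by the monotonicity lemma (the paper itself glosses over this), so the separate small-index estimate you anticipate is indeed required, not optional.
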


\begin{theorem}\label{main theorem2}
For each $a>0$, and integer $n\geq 1$, let the sequencess $x_n,y_n$ be defined by (\ref{x_n}), (\ref{y_n}) and  $\gamma(a)$ be generalized Euler-Mascheroni constant, then
\begin{align}
\frac{1}{2(n+a)}+\frac{\alpha_3}{(n+a)^2}&\leq \gamma(a)-x_n<\frac{1}{2(n+a)}+\frac{\beta_3}{(n+a)^2},\label{inquealities3}\\
\frac{1}{2(n+a-1)}+\frac{\alpha_4}{(n+a-1)^2}&<y_n- \gamma(a)\leq\frac{1}{2(n+a-1)}+\frac{\beta_4}{(n+a-1)^2},\label{inquealities4}
\end{align}
with the best possible constants
\begin{align}
\alpha_3&=(1+a)^2 [\ln(1+a)-\psi(1+a)]-\frac{1+a}{2},\quad \beta_3=\frac{1}{12},\\
\alpha_4&=-\frac{1}{12},\quad\qquad \qquad \qquad  \beta_4=a^2 [\psi(a)-\ln(a)]+\frac{a}{2}.
\end{align}
\end{theorem}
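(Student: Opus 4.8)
The plan is to collapse both error terms onto the digamma function. Starting from $\sum_{k=0}^{n-1}\frac{1}{a+k}=\psi(a+n)-\psi(a)$ and from $\gamma(a)=\ln a-\psi(a)$ (obtained by letting $n\to\infty$ in (\ref{gamma(a)}) and using $\psi(a+n)-\ln(a+n-1)\to0$), I would first reduce the two quantities to clean functions of the shifted variable, namely
\[
\gamma(a)-x_n=\ln(n+a)-\psi(n+a),\qquad y_n-\gamma(a)=\psi(n+a)-\ln(n+a-1).
\]
Writing $F(t)=\ln t-\psi(t)$ and using $\psi(s+1)=\psi(s)+1/s$, the second error equals $\frac{1}{n+a-1}-F(n+a-1)$. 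This turns (\ref{inquealities3}) and (\ref{inquealities4}) into statements about $F$ alone.

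Next I would introduce the single auxiliary function $\phi(t)=t^2F(t)-\tfrac t2=t^2\bigl(\ln t-\psi(t)\bigr)-\tfrac t2$ and note that both displays are equivalent to two-sided bounds on $\phi$. With $t=n+a$ one has $\gamma(a)-x_n=\frac{1}{2t}+\frac{\phi(t)}{t^2}$, so (\ref{inquealities3}) reads $\alpha_3\le\phi(t)<\beta_3$; with $s=n+a-1$ one has $y_n-\gamma(a)=\frac{1}{2s}-\frac{\phi(s)}{s^2}$, so (\ref{inquealities4}) reads $\alpha_4<-\phi(s)\le\beta_4$. Thus the whole theorem reduces to one monotonicity statement for $\phi$, with the $y_n$ side being just the sign-reversed version of the $x_n$ side.

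The key lemma I would prove is that $\phi$ is strictly increasing on $(0,\infty)$ with $\lim_{t\to\infty}\phi(t)=\tfrac{1}{12}$, approached strictly from below. The cleanest route uses Binet's integral representation
\[
\ln t-\psi(t)=\frac{1}{2t}+2\int_0^\infty\frac{u}{(u^2+t^2)(e^{2\pi u}-1)}\,du,
\]
which gives $\phi(t)=2\int_0^\infty\frac{t^2u}{(t^2+u^2)(e^{2\pi u}-1)}\,du$. For each fixed $u>0$ the factor $t^2/(t^2+u^2)$ is strictly increasing in $t$, since its $t$-derivative is $2tu^2/(t^2+u^2)^2>0$, and it increases to $1$; hence $\phi$ is strictly increasing, and monotone convergence yields $\lim_{t\to\infty}\phi(t)=2\int_0^\infty\frac{u}{e^{2\pi u}-1}\,du=\tfrac{1}{12}$ with $\phi(t)<\tfrac1{12}$ for every finite $t$.

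Finally I would read off the best constants. For $x_n$ the argument $t=n+a$ runs over $\{1+a,2+a,\dots\}$, so monotonicity gives $\phi(1+a)\le\phi(n+a)<\tfrac1{12}$; this forces $\alpha_3=\phi(1+a)=(1+a)^2[\ln(1+a)-\psi(1+a)]-\tfrac{1+a}{2}$ (attained at $n=1$, hence the non-strict lower bound) and $\beta_3=\tfrac1{12}$ (the unattained limit, hence the strict upper bound). For $y_n$ the argument $s=n+a-1$ runs over $\{a,1+a,\dots\}$ and $-\phi$ is strictly decreasing, so $-\tfrac1{12}<-\phi(s)\le-\phi(a)$, giving $\alpha_4=-\tfrac1{12}$ (limit, strict) and $\beta_4=-\phi(a)=a^2[\psi(a)-\ln a]+\tfrac a2$ (attained at $n=1$, non-strict), with sharpness following from equality at $n=1$ and from $\phi(n+a)\to\tfrac1{12}$, $-\phi(n+a-1)\to-\tfrac1{12}$ as $n\to\infty$. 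The main obstacle is the monotonicity lemma: a direct attack through $\phi'(t)>0$ would require controlling $\psi'$ and $\psi''$ simultaneously and is unpleasant, so the real work lies in setting up the Binet representation and justifying the passage to the limit (monotone convergence) together with the evaluation $\int_0^\infty u/(e^{2\pi u}-1)\,du=\tfrac1{24}$.
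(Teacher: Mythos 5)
Your proposal is correct, and its overall architecture is the same as the paper's: both reduce (\ref{inquealities3}) and (\ref{inquealities4}) to two-sided bounds on the single function $x^2(\psi(x)-\ln x)+\tfrac{x}{2}$ (your $\phi$ is exactly $-f_3$ from Lemma \ref{lemma2}), and both then read off the endpoint values at $n=1$ and the limit $\pm 1/12$ at infinity. The one genuine difference is where the monotonicity of this function comes from: the paper imports it wholesale from Theorem 1.7 of Qiu and Vuorinen \cite{Qiu S L}, whereas you prove it from scratch via Binet's second integral, writing $\phi(t)=2\int_0^\infty \frac{t^2u}{(t^2+u^2)(e^{2\pi u}-1)}\,du$ and observing that the integrand is increasing in $t$ pointwise, with monotone convergence giving the limit $2\int_0^\infty u/(e^{2\pi u}-1)\,du=1/12$. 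Your route makes the argument self-contained and arguably cleaner than a direct attack on $\phi'$, at the cost of having to justify the Binet representation; the paper's route is shorter but rests entirely on the cited result. Both the reduction identities and the final identification of $\alpha_3,\beta_3,\alpha_4,\beta_4$ in your write-up check out, including the shift $\psi(s+1)=\psi(s)+1/s$ needed to express $y_n-\gamma(a)$ in terms of $\phi(n+a-1)$.
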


\section{\normalsize Preliminaries}\label{sec:bd}
In this section, we give out several formulas and lemmas in order to establish our main results stated in section 1. Firstly, let us recall some known
results for the psi (or digamma) function $\psi(x)$.

 For real numbers $x, y>0$, the gamma and psi functions are
defined as
\begin{align*}
\Gamma(x)=\int_0^\infty t^{x-1}e^{-t}dt,
\quad \psi(x)=\frac{\Gamma'(x)}{\Gamma(x)},
\end{align*}
respectively.

The  psi function $\psi(x)$  has the following Recurrence Formulas \cite{Abramowitz M}
\begin{align}
\psi(n+z)=\frac{1}{(n-1)+z}+\frac{1}{(n-2)+z}+\cdots+\frac{1}{2+z}+\frac{1}{1+z}+\frac1z+\psi(z).\label{Recurrence Formulas2}
\end{align}
 and the Asymptotic Formulas (\cite{Abramowitz M}),
\begin{align}\label{Asymptotic Formulas}
\psi(z)\sim \ln(z)-\frac{1}{2z}-\frac{1}{12z^2}+\frac{1}{120z^4}-\frac{1}{252z^6}+\cdots \quad (z\rightarrow \infty \quad\hbox{in}\quad |\arg z|<\pi),
\end{align}
According to (\ref{Recurrence Formulas2}) and the definition of $x_n, y_n$ , we have
\begin{align}
&x_n=\psi(n+a)-\psi(a)-\ln\frac{n+a}{a},\label{x_n}\\
&y_n=\psi(n+a)-\psi(a)-\ln\frac{n+a-1}{a}.\label{y_n}
\end{align}
By the definition of  $\gamma(a)$ (\ref{gamma(a)}) and the Asymptotic Formulas (\ref{Asymptotic Formulas}), then
\begin{align}
\gamma(a)=\lim_{n\rightarrow\infty} y_n=\lim_{n\rightarrow\infty}\left(\psi(n+a)-\ln(n+a-1)+\ln(a)-\psi(a)\right)=\ln(a)-\psi(a).
\end{align}
Hence
\begin{align}
&\gamma(a)-x_n=\ln(n+a)-\psi(n+a),\label{gamma-x_n}\\
&y_n-\gamma(a)=\psi(n+a)-\ln(n+a-1).\label{y_n-gamma}
\end{align}

\begin{lemma}\label{lemma1}
(1) The function
\begin{align}
f_1(x)=\frac{1}{\ln(x)-\psi(x)}-2x
\end{align}
is strictly decreasing from $(1,\infty)$ onto  $(-1/3,1/\gamma-2)$.\\
(2) The function
\begin{align}
f_2(x)=\frac{1}{\psi(x+1)-\ln(x)}-2x
\end{align}
is strictly decreasing from $[2,\infty)$ onto  $(1/3,f_2(2)]$.
\end{lemma}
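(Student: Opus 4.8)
\emph{Proof proposal.} The plan is to treat both parts by one scheme: rewrite everything through $g(x)=\ln x-\psi(x)$, reduce the asserted monotonicity to a single inequality relating $\psi$ and $\psi'$, and read off the endpoint values from $\psi(1)=-\gamma$ together with the asymptotic expansion (\ref{Asymptotic Formulas}). For part (1), the known bound $\psi(x)<\ln x$ gives $g(x)>0$ on $(0,\infty)$, so $f_1=1/g-2x$ is differentiable and
\[
f_1'(x)=\frac{-g'(x)-2g(x)^2}{g(x)^2}.
\]
Since $g'(x)=1/x-\psi'(x)$, the sign of $f_1'$ equals that of $-g'-2g^2$, so strict decrease on $(1,\infty)$ is \emph{equivalent} to the key inequality $\psi'(x)-\tfrac1x<2\bigl(\ln x-\psi(x)\bigr)^2$ for $x>1$. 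For the range I would compute two limits: at $x=1$, $\psi(1)=-\gamma$ gives $g(1)=\gamma$ and $f_1(1)=1/\gamma-2$, while (\ref{Asymptotic Formulas}) yields $g(x)=\tfrac1{2x}+\tfrac1{12x^2}+O(x^{-4})$, hence $1/g(x)=2x-\tfrac13+O(x^{-1})$ and $f_1(x)\to-\tfrac13$. Strict decrease together with these limits (the value $1/\gamma-2$ approached as $x\to1^+$ and $-1/3$ as $x\to\infty$, neither attained) pins the range down to the open interval $(-1/3,\,1/\gamma-2)$.

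Part (2) is formally identical after setting $\tilde g(x)=\psi(x+1)-\ln x=\tfrac1x-g(x)$; the recurrence $\psi(x+1)=\psi(x)+1/x$ and the lower bound $\psi(x)>\ln x-1/x$ give $\tilde g(x)>0$, and the same differentiation reduces monotonicity on $[2,\infty)$ to
\[
\frac1x-\psi'(x+1)<2\bigl(\psi(x+1)-\ln x\bigr)^2,\qquad x\ge 2.
\]
Here the endpoints are $f_2(2)=1/(\tfrac32-\gamma-\ln 2)-4$, genuinely attained since $x=2$ lies in the closed domain, and $f_2(x)\to\tfrac13$ by the same asymptotic computation, giving the half-open range $(1/3,\,f_2(2)]$.

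The real content — and the step I expect to be the main obstacle — is the pair of key inequalities, which are true special-function estimates rather than formal identities. My first approach is to substitute the sharp enveloping bounds obtained by truncating (\ref{Asymptotic Formulas}) and its derivative, namely $\tfrac1{2x}+\tfrac1{12x^2}-\tfrac1{120x^4}<\ln x-\psi(x)<\tfrac1{2x}+\tfrac1{12x^2}$ and $\tfrac1x+\tfrac1{2x^2}+\tfrac1{6x^3}-\tfrac1{30x^5}<\psi'(x)<\tfrac1x+\tfrac1{2x^2}+\tfrac1{6x^3}$, valid because the successive remainders inherit the sign of the first omitted Bernoulli term. This turns each key inequality into a polynomial inequality in $1/x$ whose leading surviving term is $1/(72x^4)>0$, so the estimate closes for all $x$ beyond an explicit threshold. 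For part (2) the stated domain $[2,\infty)$ lies well inside the region where this polynomial bound is already positive, so nothing further is needed there. The delicate case is part (1), where the same bound leaves only a bounded interval near the left endpoint (roughly $(1,\tfrac43)$) uncovered; on it one verifies the explicit continuous quantity $h_1(x)=2(\ln x-\psi(x))^2-\psi'(x)+\tfrac1x$ directly, noting in particular $h_1(1)=2\gamma^2-\pi^2/6+1>0$.

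A cleaner alternative that sidesteps the truncation bookkeeping is a telescoping argument. Writing $h_1=2g^2+g'$ for the left-hand deficit in part (1), one checks $h_1(x)\to0$ as $x\to\infty$, so $h_1(x)=\sum_{k\ge0}\bigl(h_1(x+k)-h_1(x+k+1)\bigr)$ and it suffices to prove each increment positive. Using $\psi'(x)-\psi'(x+1)=1/x^2$, the $\psi'$ contributions cancel and the increment collapses to
\[
h_1(x)-h_1(x+1)=2\Bigl(\frac1x-\ln\frac{x+1}{x}\Bigr)\bigl(g(x)+g(x+1)\bigr)-\frac{1}{x^2(x+1)},
\]
an expression containing no $\psi'$ at all; establishing its positivity, using $0<\tfrac1x-\ln\frac{x+1}{x}$ and elementary lower bounds on $g$, is the residual technical task, and the same device applies verbatim to part (2).
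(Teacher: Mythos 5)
Your proposal follows essentially the same route as the paper: differentiate, reduce the monotonicity claims to the key inequalities $\psi'(x)-\tfrac1x<2\bigl(\ln x-\psi(x)\bigr)^2$ and $\tfrac1x-\psi'(x+1)<2\bigl(\psi(x+1)-\ln x\bigr)^2$, verify these by substituting truncated Bernoulli-type bounds for $\psi$ and $\psi'$, and read off the endpoint values from $\psi(1)=-\gamma$ and the asymptotic expansion, exactly as the paper does. The only difference is that the paper takes one more term in the upper bound for $\psi'(x)-\tfrac1x$ (through $+\tfrac{1}{42x^7}$), which makes the resulting polynomial inequality manifestly negative for all $x\ge 1$ at once, so the separate check near the left endpoint and the telescoping alternative you sketch turn out to be unnecessary.
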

\begin{proof}
(1) Differentiation gives
\begin{align*}
\left(\ln(x)-\psi(x)\right)^2f_1'(x)=\psi'(x)-\frac1x-2(\ln(x)-\psi(x))^2,
\end{align*}
Using the inequalities \cite{Chen2}
\begin{align*}
\psi'(x)-\frac1x<\frac{1}{2x^2}+\frac{1}{6x^3}-\frac{1}{30x^5}+\frac{1}{42x^7},\quad
\ln(x)-\psi(x)>\frac{1}{2x}+\frac{1}{12x^2}-\frac{1}{120x^4}.
\end{align*}
We have
\begin{align}
\left(\ln(x)-\psi(x)\right)^2f_1'(x)<\frac{1}{50400 x^8}F_1(x),
\end{align}
where 
\begin{align}
F_1(x)=-207-3840(x-1)-6580(x-1)^2-3640(x-1)^3-700(x-1)^4,
\end{align}
we have $F(x)<0$, $f_1'(x)<0$ for $x\geq 1$, and the monotonicity  of $f_1(x)$
follows.

Clearly, $f_1(1)=1/\gamma-2$. The limiting value $\lim_{x\rightarrow \infty}f_1(x)=-1/3$ follows from
 the  Asymptotic Formulas (\ref{Asymptotic Formulas}).

(2) Differentiation yields
\begin{align*}
2\left(\psi(x+1)-\ln(x)\right)^2f_2'(x)=\frac1x+\frac{1}{x^2}-\psi'(x)-2(\psi(x)+\frac1x-\ln(x))^2.
\end{align*}
Using the inequalities \cite{Chen2}, for $x>0$,
\begin{align*}
\frac1x+\frac{1}{x^2}-\psi'(x)<\frac{1}{2x^2}-\frac{1}{6x^3}+\frac{1}{30x^5},\quad
\psi(x)+\frac1x-\ln(x)>\frac{1}{2x}-\frac{1}{12x^2}+\frac{1}{120x^4}-\frac{1}{252x^6},
\end{align*}
 we obtain
\begin{align*}
2\left(\psi(x+1)-\ln(x)\right)^2f_2'(x)<-\frac{F_2(x)}{3175200x^{12}}
\end{align*}
where
\begin{align*}
F_2(x)=&3217636+17887632(x-2)+39443124(x-2)^2+47009928(x-2)^3
+33797841(x-2)^4\\
&+15180480(x-2)^5+4189500(x-2)^6+652680(x-2)^7+44100(x-2)^8\\
>&0\quad (x\geq 2).
\end{align*}
Hence $f_2(x)$ is a decreasing function on $[2,\infty)$.
The limiting value $\lim_{x\rightarrow \infty}f_2(x)=1/6$ follows from
 the  Asymptotic Formulas (\ref{Asymptotic Formulas}).
 \end{proof}

The following Lemma follows from theorem 1.7 in \cite{Qiu S L}.

 \begin{lemma}\label{lemma2}
 The function
 \begin{align}
f_3(x)=x^2 (\psi(x)-\ln(x))+\frac x2
\end{align}
 is strictly decreasing and convex
from  $(0,\infty)$ onto $(-1/12,0)$.
\end{lemma}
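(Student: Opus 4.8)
The statement bundles three assertions: the two limiting values, strict decrease, and convexity. The endpoints are the routine part. As $x\to\infty$, the expansion (\ref{Asymptotic Formulas}) gives $\psi(x)-\ln x\sim-\frac{1}{2x}-\frac{1}{12x^2}+\cdots$, so $f_3(x)\to-1/12$; as $x\to0^+$, the behaviour $\psi(x)=-1/x-\gamma+O(x)$ makes $x^2(\psi(x)-\ln x)\sim-x-x^2\ln x\to0$ while $x/2\to0$, so $f_3(0^+)=0$. The real content is to prove $f_3'<0$ and $f_3''>0$ on all of $(0,\infty)$; once these hold, continuity together with monotonicity pins the range down to exactly $(-1/12,0)$.

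My plan is to \emph{avoid} the asymptotic polynomial bounds used in Lemma \ref{lemma1}, since those are sharp only for large $x$ and give no control near $0$. Instead I would use the classical integral representation $\psi(x)=\ln x-\frac{1}{2x}-\int_0^\infty\phi(t)e^{-xt}\,dt$, where $\phi(t)=\frac12-\frac1t+\frac{1}{e^t-1}$. This gives at once $f_3(x)=-x^2\int_0^\infty\phi(t)e^{-xt}\,dt$. From the expansion $\phi(t)=\frac{t}{12}-\frac{t^3}{720}+\cdots$ one reads off $\phi(0)=0$ and $\phi'(0)=\frac{1}{12}$, so integrating by parts twice (all boundary terms vanish except the $\phi'(0)$ contribution) yields the clean identity $f_3(x)=-\frac{1}{12}-\int_0^\infty\phi''(t)e^{-xt}\,dt$. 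Differentiating under the integral sign then gives $f_3'(x)=\int_0^\infty t\,\phi''(t)e^{-xt}\,dt$ and $f_3''(x)=-\int_0^\infty t^2\,\phi''(t)e^{-xt}\,dt$. Thus the monotonicity, the convexity, and even the lower endpoint $-1/12$ all reduce to a single sign condition.

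The crux, and what I expect to be the main obstacle, is to show $\phi''(t)<0$ for all $t>0$. A direct computation gives $\phi''(t)=-\frac{2}{t^3}+\frac{\cosh(t/2)}{4\sinh^3(t/2)}$, so setting $s=t/2$ the inequality $\phi''(t)<0$ is equivalent to $s^3\cosh s<\sinh^3 s$, i.e. to Lazarevi\'c's inequality $\left(\frac{\sinh s}{s}\right)^3>\cosh s$. I would establish this elementarily by comparing Taylor coefficients: writing $\sinh^3 s=\frac14(\sinh 3s-3\sinh s)$, the coefficient of $s^{2k+1}$ in $\sinh^3 s$ is $\frac{3^{2k+1}-3}{4\,(2k+1)!}$, while that of $s^{2k+1}$ in $s^3\cosh s$ is $\frac{1}{(2k-2)!}$ for $k\ge1$; the two coincide for $k=1,2$ and the former strictly dominates for $k\ge3$ (since $3^{2k+1}-3>4(2k+1)(2k)(2k-1)$ there), so $\sinh^3 s-s^3\cosh s$ has nonnegative power-series coefficients and is positive for $s>0$. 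With $\phi''<0$ in hand, $f_3'<0$ and $f_3''>0$ follow immediately from the integral formulas above, which completes the argument. Alternatively, as the authors indicate, the whole statement is a special case of Theorem 1.7 of \cite{Qiu S L}, so one may simply quote that result.
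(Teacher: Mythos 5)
Your argument is correct, but it takes a genuinely different route from the paper: the paper gives no proof at all, simply deducing the lemma from Theorem 1.7 of Qiu and Vuorinen \cite{Qiu S L} (as you note in your closing sentence). Your self-contained alternative is sound at every step: the Binet-type representation $\psi(x)=\ln x-\tfrac{1}{2x}-\int_0^\infty\phi(t)e^{-xt}\,dt$ with $\phi(t)=\tfrac12-\tfrac1t+\tfrac1{e^t-1}$ does give $f_3(x)=-x^2\int_0^\infty\phi(t)e^{-xt}\,dt$; the double integration by parts is legitimate because $\phi(0)=0$, $\phi'(0)=\tfrac1{12}$, and $\phi'(t)\to0$ at infinity, yielding $f_3(x)=-\tfrac1{12}-\int_0^\infty\phi''(t)e^{-xt}\,dt$; the computation $\phi''(t)=-\tfrac{2}{t^3}+\tfrac{\cosh(t/2)}{4\sinh^3(t/2)}$ checks out, and the reduction of $\phi''<0$ to Lazarevi\'c's inequality via $s=t/2$ is exact, with the coefficient comparison for $\sinh^3 s-s^3\cosh s$ verified correctly (equality of coefficients at $k=1,2$, strict dominance for $k\ge3$ since the ratio of successive terms of $3^{2k+1}-3$ exceeds that of $4(2k+1)(2k)(2k-1)$). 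Your approach buys a complete, elementary, and verifiable proof that also delivers both endpoints of the range in one stroke ($x\to\infty$ gives $-1/12$ directly, $x\to0^+$ gives $-\tfrac1{12}-\int_0^\infty\phi''=\phi'(0)-\tfrac1{12}=0$), at the cost of roughly a page of work; the paper's citation buys brevity but leaves the reader to consult an external source, and in a paper whose other lemma is proved by ad hoc polynomial bounds valid only for large $x$, your uniform integral-representation method is arguably the more robust template.
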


\section{\normalsize Proof of the main theorem}\label{sec:bd}

\emph{\textbf{Proof of  Theorem }\ref{main theorem1}}. \quad  (1).  According to (\ref{gamma-x_n}),
the inequality (\ref{inquealities1}) can be written as
 \begin{align*}
-\beta<\frac{1}{\ln(n+a)-\psi(n+a)}-2(n+a)<-\alpha.
 \end{align*}
 By the Lemma \ref{lemma1} (1), we know that the sequence
 \begin{align*}
 f_1(n+a)=\frac{1}{\ln(n+a)-\psi(n+a)}-2(n+a), \quad (n\in \mathbb{N})
 \end{align*}
 is strictly decreasing. This leads to
 \begin{align*}
 -\frac13=\lim_{n\rightarrow \infty} f_1(n)<f_1(n)\leq f_1(1)=\frac{1}{\ln(1+a)-\psi(1+a)}-2(1+a)
 \end{align*}
 Hence the best possible constants are
\begin{align*}
\alpha_1=2(1+a)-\frac{1}{\psi(1+a)-\ln(1+a)},\quad \beta_1=\frac13.
\end{align*}

(2).  According to (\ref{y_n-gamma}), the inequality (\ref{inquealities2}) can be written as
\begin{align*}
1-\frac\beta2<\frac{1}{2 (\psi(n+a)-\ln(n+a-1))}-(n+a-1)\leq1-\frac\alpha2.
\end{align*}
 By the Lemma \ref{lemma1} (2),  the sequence
\begin{align*}
 \tilde{f}_2(n+a-1)=\frac{1}{2 (\psi(n+a)-\ln(n+a-1))}-(n+a-1) \quad (n\geq 2)
 \end{align*}
 is strictly decreasing. Hence
 \begin{align*}
 \frac16=\lim_{n\rightarrow \infty} \tilde{f}_2(n)<\tilde{f}_2(n)\leq \max\{\tilde{f}_2(a),\tilde{f}_2(1+a),\tilde{f}_2(2+a)\},
 \end{align*}
 let $d:=\max\{\tilde{f}_2(a),\tilde{f}_2(1+a),\tilde{f}_2(2+a)\}$, hence the best possible constants are
\begin{align}
\alpha_2=2(1-d),\qquad  \beta_2=\frac53.
\end{align}

 \emph{\textbf{Proof of  Theorem} \ref{main theorem2}}.
 According to (\ref{gamma-x_n})-(\ref{y_n-gamma}), the inequality (\ref{inquealities3})-(\ref{inquealities4}) can be written as
 \begin{align*}
 &\alpha_3\leq (n+a)^2\left(\ln(n+a)-\psi(n+a)\right)-\frac{(n+a)}{2}<\beta_3,\\
  &\alpha_4<(n+a-1)^2\left(\psi(n+a-1)-\ln(n+a-1)\right)+\frac{(n+a-1)}{2}\leq\beta_4.
 \end{align*}
 By the Lemma \ref{lemma2}, we know that the sequence
 \begin{align*}
 \tilde{f}_3(n+a-1)=(n+a-1)^2\left(\psi(n+a-1)-\ln(n+a-1)\right)+\frac{(n+a-1)}{2}, \quad (n\in \mathbb{N})
 \end{align*}
  is strictly decreasing, and $\lim_{n\rightarrow \infty} f_3(n)=-1/12$. Hence, the best possible constants are
\begin{align*}
\alpha_3&=(1+a)^2 [\ln(1+a)-\psi(1+a)]-\frac{1+a}{2},\quad \beta_3=\frac{1}{12},\\
\alpha_4&=-\frac{1}{12},\quad \quad\quad\quad\quad\quad\quad\beta_4=(a)^2 [\psi(a)-\ln(a)]+\frac{a}{2}.
\end{align*}

\begin{remark}
(1).
Taking $a=1$ in Theorem \ref{main theorem1} (2), then we get  the inequality (\ref{chen}) with the constants $\alpha=(2\gamma-1)/(1-\gamma)$ and $\beta=1/3$  are the best possible.

(2).Taking $a=1$ in the inequality(\ref{inquealities4}) of Theorem \ref{main theorem2},  then we get  the inequality (\ref{Qiu}) with the constants $\alpha=1/12$ and $\beta=\gamma-1/2$  are the best possible.

(3). From Theorem \ref{main theorem1}, we know that the constants 1/3 and 5/3 are best possible in Theorem  \ref{theorem1} for any  $a>0$.
\end{remark}

\noindent\textbf{Acknowledgements} This work was completed with the support of National Natural Science Foundation of China (No.11401531, No.11601485.), the Science Foundation of Zhejiang Sci-Tech University (ZSTU)(No.14062093-Y) and the Natural Science Foundation of Zhejiang Province (No.LQ17A010010)

\clearpage

\end{document}